\documentclass[11pt]{article}
\usepackage[utf8]{inputenc}
\usepackage{amsmath, amssymb, amsthm}
\usepackage{graphicx}
\usepackage[margin=1in]{geometry}
\usepackage{hyperref}
\usepackage{cite}

\theoremstyle{plain}
\newtheorem{theorem}{Theorem}
\newtheorem{lemma}{Lemma}
\theoremstyle{remark}
\newtheorem*{remark}{Remark}

\newcommand{\Nin}{N^{\mathrm{in}}}
\newcommand{\Nout}{N^{\mathrm{out}}}

\newcommand{\LHS}{\mathrm{LHS}}
\newcommand{\RHS}{\mathrm{RHS}}
\newcommand{\R}{\mathbb{R}}

\newcommand{\half}{\tfrac{1}{2}}

\title{Nonexistence of Whirling-Knight Tours at Half Coil Count\\ for $n\equiv 4,6\pmod 8$}
\author{Shisheng Li\\ \small\texttt{shisheng@mail.ustc.edu.cn}}
\date{\today}

\begin{document}
\maketitle

\begin{abstract}
A \emph{whirling knight's tour} is a Hamiltonian cycle in the digraph
of counter-clockwise knight steps about the centre of an $n\times n$
board; its \emph{coil count} $c$ is the winding number around the
centre.  We prove that no such tour with $c=n/2$ exists when
$n\equiv 4\pmod 8$ ($n\ge 4$) or $n\equiv 6\pmod 8$ ($n\ge 6$),
settling a conjecture of Beluhov.  For each residue class we exhibit
a closed-form Farkas certificate for infeasibility of a cycle-cover
LP relaxation; the two certificates are structurally distinct.

\medskip\noindent
\textbf{Keywords:} knight's tour, whirling knight, Hamiltonian cycle,
Farkas certificate, linear programming relaxation, parity counting.

\medskip\noindent
\textbf{MSC 2020:} 05C45 (Eulerian and Hamiltonian graphs);
05C38 (paths and cycles); 05C20 (directed graphs); 90C05 (linear
programming).
\end{abstract}

\section{Introduction}\label{sec:intro}

The \emph{whirling knight's digraph} on an $m\times n$ board has the
$mn$ cells as vertices and an arc $(i,j)\to(i',j')$ whenever
$(i-i')^2+(j-j')^2=5$ and the step proceeds counter-clockwise (CCW)
about the pivot $p=\bigl(\tfrac{m-1}{2},\,\tfrac{n-1}{2}\bigr)$.  A
\emph{whirling tour} is a Hamiltonian cycle in this digraph; its
\emph{coil count} $c$ is the winding number of the cycle about $p$.
Equivalently, $c$ is the number of arcs of the cycle that cross the
open vertical ray above $p$ (the \emph{north plumb-line}):  the
winding number around $p$ counts signed crossings of any ray from
$p$, and since every arc is CCW each crossing contributes $+1$,
giving the equivalence.  This object was introduced
by Bennett~\cite{bennett1947} and studied
in~\cite[Exercises~227--231]{knuth-fasc8a}.

Any $m\times n$ whirling tour with $m\le n$ satisfies $m/2\le c\le m$.
The lower endpoint $c=n/2$ on square boards is the focus of this
paper.  An integer-programming search by Beluhov produced $c=n/2$
tours at $n=16$ and $n=18$ but none for $n\equiv 4,6\pmod 8$,
leading him to conjecture nonexistence in those residue classes; the
complementary classes $n\equiv 0,2\pmod 8$ admit $c=n/2$ tours, so
any uniform argument must single out residues $4$ and $6$.  For
related work on leaper Hamilton tours
see~\cite{beluhov-willcocks,beluhov-leaper}.

\medskip\noindent\textbf{Main results.}
\begin{theorem}\label{thm:T1}
For every $n\equiv 6\pmod 8$ with $n\ge 6$, no $n\times n$ whirling
knight's tour with $c=n/2$ coils exists.
\end{theorem}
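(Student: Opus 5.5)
We argue by contradiction: suppose an $n\times n$ whirling tour $H$ with $n\equiv 6\pmod 8$ and $c=n/2$ exists. We then relax $H$ to a fractional cycle cover and exhibit a Farkas certificate showing that the relaxation is infeasible. Write $x_a\in\{0,1\}$ for the indicator of arc $a$ lying in $H$. These variables satisfy the degree constraints
\[
\sum_{a\in \Nout(v)} x_a = 1,\qquad \sum_{a\in \Nin(v)} x_a = 1 \quad\text{for every cell } v,
\]
the coil constraint $\sum_{a\in P} x_a = n/2$, where $P$ is the set of arcs crossing the north plumb-line, and $x\ge 0$. (By symmetry we may also impose the same constraint on the south, east and west plumb-lines, since every ray from $p$ gives the same winding count.) It suffices to show that this LP has no real solution. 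By Farkas' lemma, this amounts to finding potentials $u_v$ (out-degree), $w_v$ (in-degree), and a multiplier $\lambda$ for each plumb-line constraint such that the reduced cost
\[
u_{\mathrm{tail}(a)}+w_{\mathrm{head}(a)}+\lambda\,[a\in P]
\]
is at least $0$ on every arc, while $\sum_v (u_v+w_v)+\lambda\, n/2<0$.

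\textbf{Building the certificate.} The certificate should reflect the geometry of the board. We decompose the board into concentric square rings, with ring $k$ at Chebyshev distance $k+\half$ from $p$. CCW knight moves change the ring index by at most $2$, and a move that crosses the plumb-line near the centre advances "angularly" only a little. The intuition behind the bound $c\ge n/2$ is that each coil can pass through only a bounded number of outer-ring cells. At exactly $c=n/2$ the cycle is therefore forced to be tight: in every ring, essentially all arcs leaving a cell must go to a neighbouring ring in a rigid pattern.

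I would therefore choose $u_v,w_v$ to depend only on the ring index of $v$ together with a two-colouring. The colouring is the chessboard colour, or possibly the colour of $v$ mod $4$ in the coordinate along the ring. Because knight moves alternate chessboard colour, this is where the residue of $n$ mod $8$ enters: ring $k$ has $8k+4$ cells, and for $n\equiv 6\pmod 8$ the count of rings of a given parity type is odd. That oddness is what should make $\sum_v(u_v+w_v)$ fall short by a half-integer amount that cannot be made up.

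\textbf{Order of work.} I would proceed in four steps. First, compute the LP for small cases ($n=6,14,22$) and read off the optimal dual numerically. Second, guess closed forms for the dual as functions of the ring index $k$, the colour, and the side of the square on which $v$ lies; the corners will need special treatment. Third, verify the arc inequalities by grouping arcs into finitely many types: ring-preserving moves, ring-changing moves, moves across the plumb-line, and moves near the corners. Each type yields an inequality that is linear in $k$, so it suffices to check the endpoints. Fourth, sum the objective in closed form and confirm that it equals a strictly negative constant, which is presumably $-\half$ or $-1$ independent of $n$.

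\textbf{Main obstacle.} I expect the main obstacle to be the second step, obtaining a dual that is uniform in $n$. Optimal duals are usually non-unique and irregular near the corners and the centre. Finding a structured, ring-periodic choice whose feasibility can be verified uniformly will require careful case analysis of the arcs adjacent to corners and of the innermost $6\times 6$ block, and it will probably require perturbing the numerical optimum toward a symmetric vertex of the dual polytope.
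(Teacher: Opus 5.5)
Your proposal is a plan for finding a proof, not a proof. The Farkas multipliers $(u,w,\lambda)$ are never written down, so neither the arc inequalities nor the sign of the objective is ever checked. Everything substantive is deferred to ``read off the optimal dual numerically'' and ``guess closed forms.'' The claims that the arc inequalities fall into finitely many types, each linear in the ring index, and that the objective sums to a negative constant are made about an object that does not yet exist. The ring-parity heuristic for where $n\equiv 6\pmod 8$ enters is likewise only a hope. Nothing shows that a ring-and-colour ansatz with corner corrections is feasible at all, let alone uniformly in $n$. Since Farkas' lemma reduces the theorem exactly to exhibiting such a certificate, the certificate is the whole proof, and it is missing.

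The missing idea is that a very sparse, local certificate suffices. Let $h=n/2=4m+3$. Checking the eight knight steps against the CCW test shows that a cell $(i,h-1)$ with $i\le h-2$ can only be entered from columns $h$ or $h+1$, and every such arc crosses the north plumb-line. Symmetrically, every CCW arc leaving $(i,h)$ with $i\le h-2$ goes to column $h-1$ or $h-2$ and crosses it. Take the rows $S=\{4k,4k+1: 0\le k\le m\}\subseteq\{0,\dots,h-2\}$. An arc from column $h$ to column $h-1$ shifts the row by $\pm 2$, which never joins two rows of $S$. So no arc is both an in-arc of $S\times\{h-1\}$ and an out-arc of $S\times\{h\}$, and any tour has at least $2|S|=4m+4=n/2+1$ distinct crossing arcs. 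In your sign convention the certificate is $w_v=-1$ on $S\times\{h-1\}$, $u_v=-1$ on $S\times\{h\}$, $\lambda=1$, and all other multipliers $0$. Every reduced cost is then $0$ or $1$ and the objective is $-1$. This is the paper's certificate. The residue enters only because rows $0,\dots,h-2$ hold $m+1$ such blocks when $n=8m+6$, whereas for $n=8m+2$ only $m$ blocks fit, giving just $4m<n/2$ cells. No rings, colourings, corner cases or extra plumb-lines are needed.
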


\begin{theorem}\label{thm:T2}
For every $n\equiv 4\pmod 8$ with $n\ge 4$, no $n\times n$ whirling
knight's tour with $c=n/2$ coils exists.
\end{theorem}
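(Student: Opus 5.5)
Since a Hamiltonian cycle in the whirling digraph is in particular a cycle cover (every cell has in-degree and out-degree exactly one in the chosen arc set), we relax Theorem~\ref{thm:T2} to the following LP. There is one variable $x_a\in[0,1]$ for each CCW knight arc $a$, subject to:
\begin{align*}
\textstyle\sum_{a\in\Nout(v)} x_a = 1,\qquad \sum_{a\in\Nin(v)} x_a = 1,\qquad \sum_{a\text{ crosses the north plumb-line}} x_a = n/2 .
\end{align*}
The plan is to exhibit a Farkas certificate showing this system is infeasible for every $n\equiv 4\pmod 8$. The certificate consists of a potential $\alpha_v$ on out-constraints, $\beta_v$ on in-constraints, and a multiplier $\gamma$ on the coil constraint. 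It must satisfy the following:
\begin{itemize}
\item for every arc $a=(u\to w)$ the reduced cost $\alpha_u+\beta_w+\gamma[a\text{ crosses}]\le 0$;
\item $\sum_v(\alpha_v+\beta_v)+\gamma n/2>0$.
\end{itemize}
(Upper bounds $x_a\le1$ are implied by the degree constraints and need no multipliers.)

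\textbf{Step 1: guess the certificate.} I would first solve the LP numerically for $n=4,12,20,28$ and read off dual solutions. Symmetry lets us restrict to certificates invariant under the $90^\circ$ rotation about $p$, which preserves the CCW digraph and the coil count. The natural ansatz is that $\alpha,\beta$ depend only on the ring index $r=\max(|i-p_1|,|j-p_2|)$ (concentric square layers), together with a colouring term. Two candidates for that term are the knight-move parity $(i+j)\bmod 2$ and a $\bmod 4$ class of the position along the ring. This is meant to capture the counting fact that separates residues $4$ and $8$ modulo $8$.

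Heuristically, with $c=n/2$ each coil must advance nearly one ring per quarter turn with almost no slack. A knight step changes the ring index by at most $2$. Hence the outer rings force a rigid ``spiral'' structure in which each coil covers an average of $2n$ cells, and the parity of the number of cells per ring-pair ($8r$ type counts) becomes incompatible when $n/4$ is odd. I expect the certificate to be affine in $r$ plus a term alternating with ring parity. I expect $\gamma$ to be a constant like $-1$, or a small multiple of it, chosen to penalise plumb-line crossings relative to inward progress.

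\textbf{Step 2: verify dual feasibility.} Classify all CCW knight arcs by type: same ring, one ring in or out, two rings in or out, near a corner, and crossing or not crossing the plumb-line. For each type, check the reduced-cost inequality directly from the closed formulas. Boundary rings (the outermost ring, where out-degree choices are restricted, and the central $2\times2$ block) need separate handling. This is a finite case analysis uniform in $n$, because the formulas are piecewise-affine in the coordinates.

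\textbf{Step 3: evaluate the objective.} Sum $\alpha+\beta$ over all $n^2$ cells using ring sizes $8r-4$ (with half-integer offsets for even $n$), add $\gamma n/2$, and simplify to a closed form. We then show it equals a positive constant, for example $1$, exactly when $n\equiv4\pmod8$. This is where the residue condition enters through floor-type sums such as $\sum(-1)^r(8r-4)$.

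I expect Step~1 to be the main obstacle. Finding a closed-form dual valid for all $n$ requires extrapolating from numerical LP duals, which are highly non-unique. I would try a vertex-optimal dual after adding symmetry constraints and a minimal-support objective, to obtain clean ring-periodic patterns. The small case $n=4$ may need to be checked separately if the generic formulas degenerate.
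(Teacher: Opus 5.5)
The proposal has a genuine gap: it never produces the certificate, and that certificate is the whole proof. You use the same cycle-cover relaxation and Farkas framework as the paper, but Steps~2 and~3 verify and evaluate an object that Step~1 only hopes to find numerically.

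Worse, the ansatz you propose for Step~1 cannot produce a certificate.

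\textbf{(a) Symmetrization.} Rotation preserves the winding number of a tour, but not the north-crossing row of the LP. With $\gamma$ multiplying that row, as you wrote it, rotation-invariant $(\alpha,\beta)$ never work:
\begin{itemize}
\item A CCW arc turns by less than $\pi$ about $p$, so it crosses at most two of the four half-axis rays. Hence some rotated copy of every arc misses the north ray.
\item Invariance then forces $\alpha_u+\beta_w\le 0$ on \emph{every} arc.
\item Summing over any cycle cover (if none exists the theorem is vacuous) gives $\sum_v(\alpha_v+\beta_v)\le 0$. Your objective is then at most $\gamma n/2\le 0$ for the sign $\gamma\le 0$ that an excess-crossing obstruction needs; your own guess is $\gamma=-1$.
\end{itemize}
The coil row is invariant only modulo the degree rows: flux conservation gives $N(x)-W(x)=\sum_{v\in Q}\bigl(\mathrm{in}_v(x)-\mathrm{out}_v(x)\bigr)$, where $N(x),W(x)$ are the flows across the north and west rays and $Q$ is the quadrant between them. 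So you would first have to replace the coil row by the average of the four ray rows. Note also that $(i,j)\mapsto(j,n-1-i)$ swaps the two knight colours on an even board. Consequently an invariant potential depending only on ring and colour is colour-blind.

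\textbf{(b) Ring-based potentials.} Even after that repair, potentials depending only on the ring index plus a colouring or a position class mod~$4$ cannot see the plumb-line.
\begin{itemize}
\item In a large ring, every ray-crossing arc has a translate four cells along the same side. This translate has the same ring indices and class and crosses no ray.
\item Hence $\alpha_u+\beta_w\le 0$ on all arcs outside a bounded central region.
\item Evaluating on a cycle cover then gives $\sum_v(\alpha_v+\beta_v)\le C|\gamma|$ with $C$ independent of $n$, whereas you need more than $|\gamma|\,n/2$.
\end{itemize}
Each ring is also a colour-alternating cycle of even length $8r-4$, so ring sums carry no parity surplus.

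\textbf{What is missing.} The paper's certificate is built along the north plumb-line, not around rings.
\begin{itemize}
\item By Lemma~\ref{lem:G}, for $i\le h-2$ every CCW arc into $(i,h-1)$ and every CCW arc out of $(i,h)$ crosses the plumb-line. So weight $+1$ on the in-row of each head $(r,h-1)$ and on the out-row of each tail $(r,h)$, with $\gamma=-1$, has nonpositive reduced costs, provided no two chosen rows differ by $2$.
\item In $\{0,\dots,4m\}$ at most $2m+1$ rows avoid difference $2$. So for $n=8m+4$ these blocks alone reach objective $2(2m+1)-(4m+2)=0$ and no better. This is exactly where the residue bites.
\item The extra unit comes from the triangle $T=\{i\le h-1,\ j\ge h,\ i+j\le n-1\}$, with in-row weight $-1$ on its even cells and out-row weight $+1$ on its odd cells.
\item Arcs inside $T$ have reduced cost $0$, because knight moves flip colour. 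Arcs leaving $T$ from odd cells cross the plumb-line. Arcs entering $T$ pick up only a nonpositive weight.
\item The anti-diagonal cells $(i,n-1-i)$ are all odd. So each of the $2m+1$ odd rows of $T$ has one surplus odd cell, giving $|T_{\mathrm{odd}}|-|T_{\mathrm{even}}|=2m+1$.
\item Take blocks on $R=\{0,4,\dots,4m\}$. Their tails $(r,h)$ lie in $T_{\mathrm{even}}$, and by Lemma~\ref{lem:G}(b) they never point into $T$.
\end{itemize}
The objective is then $(2m+1)+2(m+1)-(4m+2)=1>0$.
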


Together, Theorems~\ref{thm:T1} and~\ref{thm:T2} settle Beluhov's
conjecture.

\medskip\noindent\textbf{Method.}
We use the cycle-cover LP relaxation: doubly-stochastic edge
assignments on the CCW knight digraph subject to a single coil-count
equation $\sum_e w_e x_e=c$, where $w_e\in\{0,1\}$ marks crossings of
the north plumb-line.  Infeasibility of this LP rules out a whirling
tour at the given $c$.

For each residue class we exhibit a closed-form Farkas dual
$(\alpha,\beta,\gamma)\in\R^V\times\R^V\times\R$ satisfying
$\alpha_v+\beta_u+\gamma w_e\le 0$ for every arc $e=(u,v)$ and
$\sum_v\alpha_v+\sum_v\beta_v+c\gamma>0$.  Both certificates have
small integer support, but are structurally distinct:
\begin{itemize}
\item For $n\equiv 6\pmod 8$ (Theorem~\ref{thm:T1}), the certificate
puts $+1$-weights on two short blocks of cells flanking the pivot
column.  The combinatorial reading reduces the proof to three
elementary geometric facts about the CCW knight digraph,
contradicting the coil count by a single excess crossing.
\item For $n\equiv 4\pmod 8$ (Theorem~\ref{thm:T2}), the certificate
combines an NE-triangle of cells carrying $\pm 1$ weights according to
$(i+j)$-parity, plus an analogous block-row correction.  The crucial
arithmetic is a parity-counting identity stating that, on the chosen
triangle, the number of cells with odd coordinate sum exceeds the
even count by exactly $2m+1$, where $n=8m+4$.
\end{itemize}

\medskip\noindent\textbf{Layout.}
Section~\ref{sec:setup} sets up the cycle-cover LP and Farkas
certificates.  Section~\ref{sec:lemma} isolates a geometric lemma
used in both proofs.  Sections~\ref{sec:T1} and~\ref{sec:T2} prove
Theorems~\ref{thm:T1} and~\ref{thm:T2}.

\section{Notation and the cycle-cover LP}\label{sec:setup}

\subsection*{Coordinates and the CCW digraph}

Cells of an $n\times n$ board are labelled $(i,j)$ with integer
$0\le i,j\le n-1$;  $i$~is the row index (from top) and $j$~is the
column index (from left).  Each cell is identified with the integer
point at its geometric centre, and the pivot
$(p,q)=\bigl(\tfrac{n-1}{2},\tfrac{n-1}{2}\bigr)$ is the geometric
centre of the board.  Throughout, $n$ is even, so $p$ and $q$ are
half-integers and no cell coincides with the pivot.
Figure~\ref{fig:coords} illustrates the convention on a $4\times 4$
board.

\begin{figure}[ht]
\centering
\includegraphics[width=2.6in]{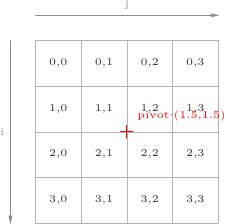}
\caption{Coordinates on a $4\times 4$ board, pivot at $(1.5,1.5)$.}
\label{fig:coords}
\end{figure}

A directed knight arc $(i,j)\to(i',j')$ (with
$(i-i')^2+(j-j')^2=5$) is \emph{counter-clockwise} (CCW) about the
pivot iff
\[
  (i-p)(j'-q) > (i'-p)(j-q).
\]
The \emph{whirling-knight digraph} on $V=\{0,\dots,n-1\}^2$ has all
such arcs; a \emph{whirling tour} is a Hamiltonian directed cycle
in it.

The \emph{coil count} $c$ of a whirling tour~$T$ is the number of
arcs of $T$ that cross the open ray $\{(x,q):x<p\}$, the
\emph{north plumb-line} above the pivot.  For $n$ even, the pivot is
on a half-integer column, so this ray lies between cells and each arc
crosses it $0$ or $1$ times.  Set $w_e=1$ if arc~$e$ crosses the
north plumb-line, else $w_e=0$.  Figure~\ref{fig:ccw} illustrates a
single CCW arc and its crossing point.

\begin{figure}[ht]
\centering
\includegraphics[width=2.6in]{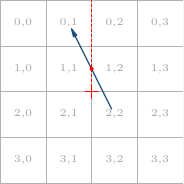}
\caption{The dashed segment is the north plumb-line above the
pivot.  The arrow is the CCW knight arc $(2,2)\to(0,1)$:  the test
gives $(2-1.5)(1-1.5)-(0-1.5)(2-1.5)=0.5>0$.  The arc crosses the
plumb-line at $(1,1.5)$, contributing~$+1$ to the coil count.}
\label{fig:ccw}
\end{figure}

\subsection*{The cycle-cover LP relaxation}

Each whirling tour~$T$ on the $n\times n$ board with $c$~coils
corresponds to an indicator vector $x_e=\mathbf 1_{e\in T}\in\{0,1\}^E$
satisfying the cycle-cover constraints
\[
  \sum_{e\,\text{into}\,v} x_e = 1, \quad
  \sum_{e\,\text{out of}\,v} x_e = 1, \quad \forall v\in V,
\]
together with the coil-count equation $\sum_e w_e\, x_e = c$.
Replacing $\{0,1\}$ by $[0,1]$ yields the linear-programming
relaxation $(\mathrm{LP}_{n,c})$:
\[
\begin{array}{ll}
\text{find } x\in\R^E \\
\text{s.t.} &
  \displaystyle\sum_{e\,\text{into}\,v} x_e = 1,\;
  \displaystyle\sum_{e\,\text{out of}\,v} x_e = 1, \quad \forall v\in V; \\[6pt]
  & \displaystyle\sum_{e\in E} w_e\, x_e = c, \\[6pt]
  & 0\le x_e\le 1, \quad \forall e\in E.
\end{array}
\]
The following lemma is immediate.

\begin{lemma}[Reduction Lemma]\label{lem:red}
If a whirling tour with $c$~coils exists at $(n,c)$, then
$(\mathrm{LP}_{n,c})$ is feasible.  Equivalently,
$(\mathrm{LP}_{n,c})$ infeasible implies that no whirling tour
with $c$~coils on the $n\times n$ board exists.
\end{lemma}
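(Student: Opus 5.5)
The plan is to check directly that the indicator vector of a tour satisfies every constraint of $(\mathrm{LP}_{n,c})$, and then take the contrapositive.

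Suppose $T$ is a whirling tour on the $n\times n$ board with $c$ coils. Define $x\in\R^E$ by $x_e=\mathbf 1_{e\in T}$.

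First, the degree constraints. Since $T$ is a Hamiltonian directed cycle in the whirling-knight digraph, every vertex $v\in V$ lies on $T$ exactly once. Hence exactly one arc of $T$ enters $v$ and exactly one leaves it, so
\[
\sum_{e\,\text{into}\,v}x_e=1 \quad\text{and}\quad \sum_{e\,\text{out of}\,v}x_e=1 .
\]
The only point needing care is the degenerate small case, which cannot arise here. Because $n\ge 4$ and the digraph has no loops, the cycle has length $n^2\ge 3$. So the arcs entering and leaving $v$ are distinct arcs of $E$, and no arc is counted twice.

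Second, the coil-count equation. By definition, $c$ is the number of arcs of $T$ that cross the north plumb-line. Since $w_e$ is the indicator that $e$ crosses it,
\[
\sum_e w_e x_e=\#\{e\in T: w_e=1\}=c .
\]
Here we use the observation from the setup that, for $n$ even, each arc crosses the ray $0$ or $1$ times. This makes $w_e$ well defined as a $0/1$ quantity and makes the count exact.

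Third, the box constraints hold trivially, since $x\in\{0,1\}^E\subseteq[0,1]^E$.

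So $x$ is a feasible point of $(\mathrm{LP}_{n,c})$, which proves the first statement. The second statement ("infeasible implies no tour") is its contrapositive.

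There is no real obstacle in this argument. The only step requiring attention is confirming that the LP's coil-count equation uses the same notion of crossing as the definition of $c$; both are given by the same indicator $w_e$.
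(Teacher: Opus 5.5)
Your proof is correct and follows the paper's argument: take the indicator vector $x_e=\mathbf 1_{e\in T}$, note that the box constraints are trivial, that the in/out-degree rows hold because each vertex of a Hamiltonian cycle has exactly one incoming and one outgoing tour arc, and that the coil-count row is just the definition of $c$ via $w_e$, then take the contrapositive. Your extra remark about the cycle having length $\ge 3$ is harmless but unnecessary: the in-degree and out-degree conditions are separate rows, so no double counting could arise in any case.
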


\begin{proof}
Let $T$ be such a tour and set $x_e=\mathbf{1}_{e\in T}\in\{0,1\}\subset[0,1]$;
the box constraint holds trivially.  In a Hamiltonian cycle every
vertex has exactly one incoming and one outgoing arc, so the in/out
degree constraints hold.  The coil-count constraint is the definition
of ``$c$~coils.''
\end{proof}

\subsection*{Farkas certificates}

A \emph{Farkas certificate} (cf.~\cite[\S 7]{schrijver}) for
$(\mathrm{LP}_{n,c})$ is a triple
$(\alpha,\beta,\gamma)\in\R^V\times\R^V\times\R$, where $\alpha$
multiplies the in-degree row of each vertex, $\beta$ the out-degree
row, and $\gamma$ the single coil-count row.  Define the
per-arc and scalar quantities
\[
  \LHS_e := \alpha_v + \beta_u + \gamma\, w_e
  \quad\text{(for $e=(u,v)$)},
  \qquad
  \RHS := \sum_v \alpha_v + \sum_v \beta_v + c\,\gamma.
\]
The certificate is required to satisfy
\begin{align}
  \LHS_e &\le 0, \qquad \forall e\in E, \label{eq:LHS}\\
  \RHS &> 0. \label{eq:RHS}
\end{align}
Multiplying the LP equalities by their multipliers and summing
arc-by-arc yields, for any LP-feasible $x$,
\(
  \sum_e \LHS_e\, x_e = \RHS.
\)
For $x\ge 0$ and a certificate satisfying~\eqref{eq:LHS}
and~\eqref{eq:RHS}, the left side is $\le 0$ while the right side
is $>0$, a contradiction.  Hence the existence of such a certificate
implies infeasibility of $(\mathrm{LP}_{n,c})$.

To illustrate, for $n=3$, $c=2$ the digraph has eight cells (the
centre $(1,1)$ is excluded for odd $n$) and its unique cycle cover
$T_3$ has coil count $3$, so $(\mathrm{LP}_{3,2})$ is infeasible by
direct inspection.  A
two-cell certificate $\alpha_v=\mathbf{1}_{v\in H}$,
$\beta_v=0$, $\gamma=-1$ with $H=\{(0,0),(1,0),(2,0)\}$ confirms
this; see Figures~\ref{fig:n3cycle} and~\ref{fig:n3cert}.

\begin{figure}[ht]
\centering
\begin{minipage}[t]{0.45\linewidth}
\centering
\includegraphics[width=2.4in]{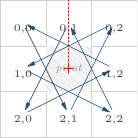}
\caption{The unique whirling tour $T_3$ on the $3\times 3$ board.
The hatched centre is the pivot $(1,1)$, excluded from the digraph.}
\label{fig:n3cycle}
\end{minipage}\hfill
\begin{minipage}[t]{0.45\linewidth}
\centering
\includegraphics[width=2.4in]{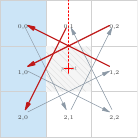}
\caption{The Farkas certificate
$(\alpha=\mathbf 1_H,\beta=0,\gamma=-1)$ for
$(\mathrm{LP}_{3,2})$.  The three red arcs of $T_3$ each cross the
north plumb-line; coil count $3\ne 2=c$, witnessing infeasibility.}
\label{fig:n3cert}
\end{minipage}
\end{figure}

\section{A geometric lemma}\label{sec:lemma}

Both proofs invoke the following geometric fact about CCW knight
arcs touching the pivot column from the north.

\begin{lemma}[Geometric Lemma]\label{lem:G}
Let $n$ be even, $p=\bigl(\tfrac{n-1}{2},\tfrac{n-1}{2}\bigr)$ the
pivot, $h=n/2$, and let $w=(i,j)$ be a cell with $i\le h-2$.
\begin{itemize}
\item[(a)] If $j=h-1$ (column directly west of the pivot), then every
CCW knight arc $u\to w$ satisfies $j_u\in\{h,h+1\}$ (in particular
$j_u\ge h$, strictly east of the pivot column) and crosses the north
plumb-line.
\item[(b)] If $j=h$ (column directly east of the pivot), then every
CCW knight arc $w\to v$ satisfies $j_v\in\{h-1,h-2\}$ and crosses the
north plumb-line.
\end{itemize}
\end{lemma}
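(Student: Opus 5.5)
Since the two parts are mirror images of each other, the plan is a direct finite case check over the eight knight displacements. I will shift to pivot-centred coordinates $x=i-p$, $y=j-q$, so that $p=q=h-\tfrac12$. Then the CCW condition for an arc $(x,y)\to(x',y')$ reads $xy'-x'y>0$. Since $x(y+\delta_y)-(x+\delta_x)y=x\delta_y-y\delta_x$, for a displacement $(\delta_x,\delta_y)$ this becomes
\[
x\,\delta_y-y\,\delta_x>0 .
\]
The north plumb-line is $\{y=0,\ x<0\}$. A knight arc crosses it exactly when its endpoints lie on opposite sides of $y=0$ (i.e.\ columns on opposite sides of the pivot column) and the crossing point has $x<0$.

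For part (a), the target is $w$ with $y_w=-\tfrac12$ and $x_w\le -\tfrac32$ (because $i\le h-2$). Write $u=w-(\delta_x,\delta_y)$ for an arc $u\to w$ with displacement $(\delta_x,\delta_y)$. Expressed at the head, the CCW condition is $x_u\delta_y-y_u\delta_x=x_w\delta_y-y_w\delta_x>0$, since the cross product $u\times w$ equals $(w-\delta)\times w=-\delta\times w=w\times\delta$. With $y_w=-\tfrac12$ this is
\[
x_w\delta_y+\tfrac12\delta_x>0 .
\]
I then test the eight displacements $(\pm1,\pm2)$ and $(\pm2,\pm1)$:
\begin{itemize}
\item If $\delta_y>0$, the term $x_w\delta_y\le-\tfrac32\delta_y$ is at most $-\tfrac32$. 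Since $\tfrac12|\delta_x|\le1$, the condition fails; this is exactly where $i\le h-2$ is used.
\item So $\delta_y<0$, and then $x_w\delta_y\ge\tfrac32$, so the condition holds for every $\delta_x$.
\end{itemize}
Hence $u$ lies in column $j-\delta_y\in\{h,h+1\}$, strictly east of the pivot column, which gives $j_u\in\{h,h+1\}$.

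It remains to show the arc crosses the plumb-line rather than the south ray. The endpoints are on opposite sides of $y=0$, so the segment meets $y=0$ at some point with $x=x_w+t\,(x_u-x_w)$. Here $t=\tfrac{1/2}{|\delta_y|}\le\tfrac12$, measured from $w$. Since $|x_u-x_w|\le2$, the crossing has $x\le x_w+1\le-\tfrac12<0$. So it lies on the north ray, and $w_e=1$.

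Part (b) is the mirror image. Here $y_w=+\tfrac12$, the arc is $w\to v$ with $v=w+\delta$, and the CCW condition is $x_w\delta_y-\tfrac12\delta_x>0$. Since $x_w\le-\tfrac32$, this forces $\delta_y<0$, so $j_v\in\{h-1,h-2\}$. The same interpolation bound, with the crossing within distance $\tfrac12|\delta_x|\cdot$(fraction $\le\tfrac12$) of $w$, shows the crossing is at $x<0$.

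The only delicate point, which I expect to be the main obstacle, is verifying the crossing location. The worst case is $|\delta_y|=1$ with $|\delta_x|=2$ and $x_w=-\tfrac32$, i.e.\ row $h-2$. There the crossing sits exactly at the midpoint, $x_w\pm1$, giving $x=-\tfrac12$, which is still strictly north of the pivot.
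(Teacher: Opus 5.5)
Your proposal is correct and follows essentially the paper's route: the same reduction of the CCW test to $x_w\delta_y-y_w\delta_x>0$ at the fixed endpoint, a check of the knight displacements (you organize it by the sign of $\delta_y$ rather than enumerating all eight steps), and linear interpolation to place the crossing on the north ray. The only blemish is the garbled phrase in part (b). There the crossing's offset from $w$ is $t|\delta_x|$ with $t=\tfrac{1}{2|\delta_y|}\le\tfrac12$, so it is at most $1$, exactly as in part (a).
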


\begin{proof}
The CCW test is $(u-p)\times(v-p)>0$, where $\times$ denotes the
$2\times 2$ determinant $(a,b)\times(c,d)=ad-bc$.  Substituting
$u=v-\Delta$ for a knight step
$\Delta\in\{(\pm 1,\pm 2),(\pm 2,\pm 1)\}$ and expanding,
\begin{equation}\label{eq:CCWdelta}
  \Delta_j\,(i_v-p_i) - \Delta_i\,(j_v-p_j) > 0.
\end{equation}
For (a) take $w=v=(i,h-1)$, so $j_v-p_j=-\half$ and
$i_v-p_i=i-(h-\half)\le-\tfrac{3}{2}$ (since $i\le h-2$).
Plugging each of the eight knight steps into~\eqref{eq:CCWdelta}
shows that exactly the four steps
$\Delta\in\{(1,-2),(-1,-2),(2,-1),(-2,-1)\}$ are CCW (the bound
$i_v-p_i\le-\tfrac{3}{2}$ enters monotonically, so the same four
$\Delta$'s remain CCW for every $i\le h-2$); all four have
$\Delta_j\in\{-1,-2\}$, so $j_u=j_v-\Delta_j\in\{h,h+1\}\ge h$.
Since $j_u\ge h>h-\half=p_j>h-1=j_v$, the arc segment crosses the
vertical line $x=p_j$;  the crossing height is the unique
$i$-coordinate at $j=p_j$ along the segment, which by linear
interpolation equals $i_v+\frac{\Delta_i}{\Delta_j}\cdot\half$.  For
the four CCW $\Delta$'s this gives heights $i\pm\tfrac14$ (from
$\Delta_j=-2$) and $i\pm 1$ (from $\Delta_j=-1$);  since
$i\le h-2$, each is at most $h-1<h-\half=p_i$, so the crossing
lies above the pivot and the arc crosses~$N$.  Part~(b) follows by the symmetric calculation
with $j_v-p_j=+\half$ and $i_w-p_i\le-\tfrac{3}{2}$, identifying the
same four CCW steps and giving
$j_v=j_w+\Delta_j\in\{h-1,h-2\}$.
\end{proof}

\begin{remark}\label{rem:sharp}
The hypothesis $i\le h-2$ is sharp for both~(a) and~(b):  for
$w=(h-1,h-1)$ (just NW of the pivot), the arc $(h-3,h-2)\to w$
is CCW (test:
$(-\tfrac52)(-\tfrac12)-(-\tfrac32)(-\tfrac12)=\tfrac12>0$) yet
has $j_u=h-2<h$, violating both the column conclusion of~(a) and
its symmetric counterpart in~(b).  Lemma~\ref{lem:G} therefore
applies to block interiors but not to cells adjacent to the pivot
row.
\end{remark}

\section{Theorem~\ref{thm:T1}: $n\equiv 6\pmod 8$}\label{sec:T1}

Write $n=8m+6$ for $m\ge 0$ and let $c=n/2=4m+3$ and $h=n/2=4m+3$.
Define
\begin{equation}\label{eq:T1support}
\Nin = \{(4k+d,\;h-1)\;:\; 0\le k\le m,\;d\in\{0,1\}\},\quad
\Nout = \{(4k+d,\;h)\;:\; 0\le k\le m,\;d\in\{0,1\}\}.
\end{equation}
Both sets contain $2(m+1)=(n+2)/4$ cells, in two adjacent columns
flanking the pivot column, arranged in $m+1$ ``blocks'' of two rows
separated by gaps of two rows in the northern half of the board.
For $m=0$ ($n=6$) the construction reduces to a single block
$\Nin=\{(0,2),(1,2)\}$, $\Nout=\{(0,3),(1,3)\}$;  the proof below
applies uniformly.  See Figure~\ref{fig:T1blocks}.

\begin{figure}[ht]
\centering
\includegraphics[width=3.4in]{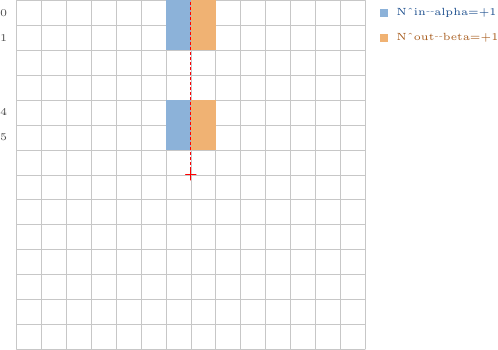}
\caption{The certificate support of Theorem~\ref{thm:T1} for
$n=14$, $m=1$.  Blue cells are $\Nin$ ($\alpha_v=1$);  orange cells
are $\Nout$ ($\beta_v=1$).  For $m=1$ there are two ``blocks'' of two
rows each (rows $0,1$ and $4,5$) separated by a two-row gap.}
\label{fig:T1blocks}
\end{figure}

\medskip\noindent\textbf{The certificate.}
Set
\begin{equation}\label{eq:T1cert}
\alpha_v = \mathbf{1}_{v\in\Nin},\quad
\beta_v = \mathbf{1}_{v\in\Nout},\quad
\gamma = -1.
\end{equation}

\begin{proof}[Proof of Theorem~\ref{thm:T1}]
We verify~\eqref{eq:RHS} and~\eqref{eq:LHS} for the
certificate~\eqref{eq:T1cert}.

For~\eqref{eq:RHS},
\(
  \RHS = |\Nin|+|\Nout|-c = 2(m+1)+2(m+1)-(4m+3) = 1>0.
\)

For~\eqref{eq:LHS}, we check $\LHS_e\le 0$ on every arc by
case analysis on whether the head/tail lies in the support.
\begin{itemize}
\item[(i)] $v\in\Nin$, $u\notin\Nout$:  $\alpha_v=1$, $\beta_u=0$.
$v$ lies in column $h-1$ at row $i_v=4k+d\le 4m+1=h-2$, so
Lemma~\ref{lem:G}(a) applies and the arc crosses~$N$.  Hence
$\LHS_e=1+0-1=0$.
\item[(ii)] $u\in\Nout$, $v\notin\Nin$:  $\beta_u=1$, $\alpha_v=0$.
$u$ lies in column $h$ at row $i_u\le h-2$, so Lemma~\ref{lem:G}(b)
applies and the arc crosses~$N$.  Hence $\LHS_e=0+1-1=0$.
\item[(iii)] $v\in\Nin$ and $u\in\Nout$:  this would give
$\LHS_e=2-1=1$, but knight geometry forbids such an arc.  Indeed it
would have $\Delta j=-1$, forcing $\Delta i=\pm 2$;  but rows in
$\Nin\cup\Nout$ are of the form $4k+d$ with $d\in\{0,1\}$, so the row
gap $i_v-i_u=4(\ell-k)+(d_v-d_u)$ with $|d_v-d_u|\le 1$ is never $\pm 2$.
\item[(iv)] Both endpoints outside the support:  $\alpha_v=\beta_u=0$,
so $\LHS_e=\gamma\cdot[e\text{ crosses }N]\le 0$.
\end{itemize}
Combined with $\RHS=1>0$, the certificate witnesses infeasibility of
$(\mathrm{LP}_{n,n/2})$, and Lemma~\ref{lem:red} concludes the proof.
\end{proof}

\medskip\noindent\textbf{Direct combinatorial restatement.}
The cases (i)--(iii) above amount to three structural facts about
the CCW knight digraph:
\begin{enumerate}
\item[(A)] Every CCW arc into $\Nin$ crosses the north plumb-line.
\item[(B)] Every CCW arc out of $\Nout$ crosses the north plumb-line.
\item[(C)] No CCW arc joins $\Nout$ to $\Nin$.
\end{enumerate}
Suppose for contradiction that a whirling tour~$T$ on the $n\times n$
board has $c=n/2$ coils.  Each cell has exactly one in-arc and one
out-arc in~$T$, so $T$ contains $|\Nin|$ arcs ending in $\Nin$ and
$|\Nout|$ arcs starting in $\Nout$.  By (A)--(B) all these arcs
cross~$N$, and by (C) the two collections are disjoint.  Hence $T$
has at least $|\Nin|+|\Nout|=4(m+1)=4m+4$ N-crossings.  But by
definition $T$ has exactly $c=4m+3$ N-crossings, contradiction.
\hfill$\square$

\section{Theorem~\ref{thm:T2}: $n\equiv 4\pmod 8$}\label{sec:T2}

Write $n=8m+4$ for $m\ge 0$ and let $c=n/2=4m+2$ and $h=n/2=4m+2$.
The certificate combines an \emph{NE-triangle} of cells with
alternating-sign weights and a small \emph{block} of $m+1$ rows.
Define
\begin{align}
T &= \{(i,j):\;0\le i\le h-1,\;h\le j\le n-1,\;i+j\le n-1\},
  \label{eq:T2T}\\
R &= \{4k:\;0\le k\le m\}.
  \label{eq:T2R}
\end{align}
Here $T$ is the upper-right triangle bounded below by the
anti-diagonal $i+j=n-1$ and above/right by the board edges (size
$|T|=h(h+1)/2$); write $T_{\mathrm{even}}$ and $T_{\mathrm{odd}}$
for the cells of $T$ with $i+j$ even and odd, respectively.  $R$
is a set of $|R|=m+1$ ``block rows''.  Note that $R$ consists of
even rows and the block columns $h-1,h$ have one even and one odd
parity, so $R\times\{h\}\subset T_{\mathrm{even}}$ (the relevant
overlap discussed below).
The certificate is
\begin{align}
\alpha_v &= -\mathbf{1}_{v\in T}\cdot\mathbf{1}_{i_v+j_v\,\text{even}}
  \;+\; \mathbf{1}_{(i_v,j_v)=(r,h-1)\text{ for some }r\in R},
  \label{eq:T2cert}\\
\beta_v &= +\mathbf{1}_{v\in T}\cdot\mathbf{1}_{i_v+j_v\,\text{odd}}
  \;+\; \mathbf{1}_{(i_v,j_v)=(r,h)\text{ for some }r\in R},
  \notag\\
\gamma &= -1. \notag
\end{align}
See Figures~\ref{fig:alpha} and~\ref{fig:beta}.

\begin{figure}[ht]
\centering
\begin{minipage}[t]{0.48\linewidth}
\centering
\includegraphics[width=2.6in]{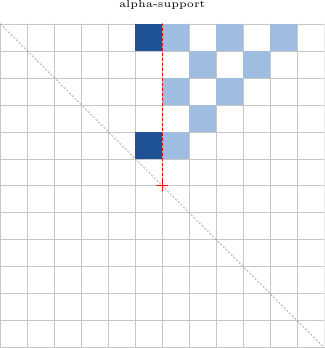}
\caption{$\alpha$-support of certificate~\eqref{eq:T2cert} at
$n=12$, $m=1$.  Light-blue cells ($T_{\mathrm{even}}$) carry
$\alpha_v=-1$;  dark-blue cells in column $h-1=5$ at rows
$R=\{0,4\}$ carry $\alpha_v=+1$.}
\label{fig:alpha}
\end{minipage}\hfill
\begin{minipage}[t]{0.48\linewidth}
\centering
\includegraphics[width=2.6in]{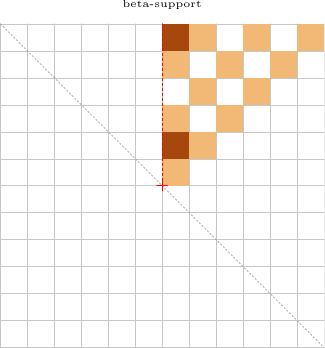}
\caption{$\beta$-support of the same certificate.  Light-orange
cells ($T_{\mathrm{odd}}$) carry $\beta_v=+1$;  dark-orange cells in
column $j=h$ at rows $R$ also carry $\beta_v=+1$.}
\label{fig:beta}
\end{minipage}
\end{figure}

\begin{proof}[Proof of Theorem~\ref{thm:T2}]
Write $T_{\mathrm{even}}, T_{\mathrm{odd}}$ for the $T$-cells with
$i+j$ even or odd.  Row $i$ ($0\le i\le h-1$) contributes $T$-cells
with $j\in[h,\,n-1-i]$, a contiguous strip of length $L_i=h-i$.
Since $h=4m+2$ is even, $i+j$ at $j=h$ has the parity of $i$, and
parities alternate as $j$ increases.  For $i$ even, $L_i$ is even,
so the strip splits evenly:  $L_i/2$ cells of each parity.  For $i$
odd, $L_i$ is odd and the strip starts (at $j=h$) with parity
\emph{odd}, giving $(L_i+1)/2$ odd cells and $(L_i-1)/2$ even cells:
one extra odd cell.  The number of odd $i$ in $[0,h-1]=[0,4m+1]$ is
exactly $2m+1$, so
\begin{equation}\label{eq:parity}
  |T_{\mathrm{odd}}| - |T_{\mathrm{even}}| = 2m+1.
\end{equation}
Hence
\[
  \RHS = \sum_v\alpha_v + \sum_v\beta_v + c\gamma
       = (|T_{\mathrm{odd}}|-|T_{\mathrm{even}}|) + 2|R| - c
       = (2m+1)+2(m+1)-(4m+2) = 1>0,
\]
verifying~\eqref{eq:RHS}.

For~\eqref{eq:LHS}, recall
$\LHS_e = \alpha_v + \beta_u - \mathbf{1}_{e\text{ crosses }N}$.
Check $\LHS_e\le 0$ on every arc by case analysis (the cases below
form a covering, not a partition: arcs with $u\in R\times\{h\}\subset
T_{\mathrm{even}}$ fall under both (ii) and (v), and (v) contains the
correct argument):
\begin{itemize}
\item[(i)] $u,v\in T$:  both endpoints lie in column $j\ge h$, so
the arc stays east of the pivot column and does not cross~$N$.
The block terms cannot contribute:  $R\times\{h-1\}$ has $j<h$
so $v\notin T$ rules out $\alpha_v=+1$;  for $u\in R\times\{h\}$,
Lemma~\ref{lem:G}(b) forces $j_v\in\{h-1,h-2\}<h$, contradicting
$v\in T$.  Knight moves flip $(i+j)\bmod 2$, so the $T$-parts of
$\alpha,\beta$ give $(\alpha_v,\beta_u)\in\{(-1,+1),(0,0)\}$ and
$\LHS_e=0$.

\item[(ii)] $u\in T\setminus(R\times\{h\})$, $v\notin T$,
$v\notin R\times\{h-1\}$ (the case $u\in R\times\{h\}$ is treated
in~(v)):  $\alpha_v=0$.  If $u\in T_{\mathrm{even}}$ then
$\beta_u=0$, so $\LHS_e\le 0$.  Suppose $u\in T_{\mathrm{odd}}$
($\beta_u=+1$);
write $u=(h-1-\alpha,h+\beta)$ with $0\le\beta\le\alpha\le h-1$
(from $u\in T$) and $\alpha+\beta$ even (from $T_{\mathrm{odd}}$).
We claim the arc crosses~$N$.

\emph{Sub-case $i_u\le h-2$ (i.e.\ $\alpha\ge 1$).}
The CCW condition $\Delta_j(2\alpha+1)+\Delta_i(2\beta+1)<0$ is
satisfied by $\Delta\in\{(1,-2),(-1,-2),(-2,-1)\}$ (always for
$u\in T$), $(2,-1)$ (when $\alpha\ge 2\beta+1$), and $(-2,1)$
(when $\alpha\le 2\beta$);  the other three steps are never CCW
for $u\in T$.  For $\beta\ge 2$ each of these CCW $\Delta$'s sends
$u$ to a $v$ with $j_v\ge h$, $i_v\le h-1$, and $i_v+j_v\le n-1$
(when $v$ is on the board), so $v\in T$, contradicting case (ii).
The remaining sub-cases are:
\begin{itemize}
\item $\beta=0$ (i.e.\ $j_u=h$):  Lemma~\ref{lem:G}(b) applies, so
the arc crosses~$N$.
\item $\beta=1$ (i.e.\ $j_u=h+1$):  the only CCW arcs with $j_v<h$
are $\Delta\in\{(1,-2),(-1,-2)\}$, both with $j_v=h-1$;  the
crossing of $j=p_j$ at parameter $t^*=3/4$ has height
$i_u\pm 3/4\le h-5/4<p_i$, so the arc crosses~$N$.
\end{itemize}

\emph{Sub-case $i_u=h-1$ (i.e.\ $\alpha=0$).}  Parity then forces
$\beta=0$, so $u=(h-1,h)$.  Of its four CCW knight arcs, the one
with $v\in T$, namely $(h-1,h)\to(h-3,h+1)$, is excluded by the
case~(ii) hypothesis;  the remaining ones lying on the board are
$(h-1,h)\to(h,h-2)$ and $(h-1,h)\to(h-2,h-2)$ (always), plus
$(h-1,h)\to(h-3,h-1)$ for $h\ge 3$.  Their crossing heights at
$j=p_j$ are $h-\tfrac34$, $h-\tfrac54$, $h-2$ respectively, each
strictly below $p_i=h-\tfrac12$, so all three cross~$N$.

In every sub-case $\LHS_e=0+1-1=0$.

\item[(iii)] $v\in T$, $u\notin T$, $u\notin R\times\{h\}$:
$\beta_u=0$, $\alpha_v\in\{-1,0\}$,
$\LHS_e=\alpha_v-\mathbf{1}_{e\text{ crosses }N}\le 0$.

\item[(iv)] $v=(r,h-1)$, $r\in R$ (block head):  $\alpha_v=+1$.
$v$ sits in column $h-1$ at row $r\le 4m=h-2$, so
Lemma~\ref{lem:G}(a) applies:  the arc crosses~$N$.  We claim
$\beta_u=0$:
(a) $u\notin T_{\mathrm{odd}}$: since $i_v+j_v=r+h-1$ is odd
($r,h$ are both even) and knight moves flip parity, $u$ has even
parity;
(b) $u\notin R\times\{h\}$: such a $u$ would need $\Delta j=-1$,
hence $\Delta i=\pm 2$, but the rows of $R$ are spaced by multiples
of $4$, so $i_v-i_u=4(\ell-k)$ is never $\pm 2$.
Hence $\LHS_e=1+0-1=0$.

\item[(v)] $u=(r,h)$, $r\in R$ (block tail):  $\beta_u=+1$.
$u$ sits in column $h$ at row $r\le h-2$, so Lemma~\ref{lem:G}(b)
applies, giving $j_v\in\{h-1,h-2\}<h$ and the arc crosses~$N$.
Since $j_v<h$, $v\notin T$, ruling out $\alpha_v=-1$ from
$T_{\mathrm{even}}$;  and the $\Delta i\notin\{\pm 2\}$ argument
rules out $v\in R\times\{h-1\}$.  Hence $\alpha_v=0$ and
$\LHS_e=0+1-1=0$.

\item[(vi)] $u,v$ both outside $T$ and the blocks:
$\alpha_v=\beta_u=0$, $\LHS_e=-\mathbf{1}_{e\text{ crosses }N}\le 0$.
\end{itemize}
Combined with $\RHS=1>0$, the certificate witnesses infeasibility of
$(\mathrm{LP}_{n,n/2})$, and Lemma~\ref{lem:red} concludes the proof.
\end{proof}

\medskip\noindent\textbf{Worked example: $n=4$.}
Here $h=2$, $c=2$, $T=\{(0,2),(0,3),(1,2)\}$, $R=\{0\}$.  The cell
$(0,2)\in T_{\mathrm{even}}\cap R\times\{h\}$, so
$\alpha_{(0,2)}=-1$ and $\beta_{(0,2)}=+1$.  The remaining nonzero
values are $\alpha_{(0,1)}=+1$ (from $R\times\{h-1\}$),
$\beta_{(0,3)}=\beta_{(1,2)}=+1$ (from $T_{\mathrm{odd}}$).  Thus
$\sum_v\alpha_v=0$, $\sum_v\beta_v=3$, and $\RHS=0+3-2\cdot 1=1$.
The hypothesis $i_u\le h-2=0$ in Lemma~\ref{lem:G} forces the only
$T$-cells eligible for case~(ii) to lie on row $0$;  the case
analysis specializes correctly, with the boundary sub-case
$i_u=h-1=1$ (here $u=(1,2)$) yielding two on-board CCW arcs
$(1,2)\to(2,0)$ and $(1,2)\to(0,0)$ (the would-be third arc
$(1,2)\to(-1,1)$ is off-board for $h=2$), with crossing heights
$\tfrac54$ and $\tfrac34$ respectively, both below $p_i=\tfrac32$.

\medskip\noindent\textbf{Structural remark.}
Both certificates achieve $\RHS=1$, the smallest positive integer:
$c=n/2$ lies at the LP-feasibility boundary in both classes.  The
two certificates are structurally distinct: T1's $\{0,1\}$-cert
admits the brief combinatorial restatement (A)--(C), whereas T2
requires sign-mixing on an NE-triangle, replacing the König-style
counting of T1 with an alternating-sum identity.

\bigskip
\section*{Acknowledgments}

The author thanks N.~Beluhov for proposing the conjecture and for
valuable feedback, and D.~E.~Knuth for forwarding the problem.

\end{document}